\newcommand{\Id}{\mathrm{Id}}
\newcommand{\Comp}{\mathsf{Comp}}
\newcommand{\pr}{\mathrm{pr}}
\newcommand{\Cl}{\mathrm{Cl}}
\newcommand{\conv}{\mathrm{conv}}
\newcommand{\Int}{\mathrm{Int}}
\newcommand{\id}{\mathrm{id}}
\newcommand{\A}{\mathbb A}
\newcommand{\R}{\mathbb R}
\newcommand{\U}{\mathbb U}
\newcommand{\N}{\mathbb N}
\newcommand{\T}{\mathcal T}
\newcommand{\F}{\mathcal F}
\newcommand{\C}{\mathcal C}
\newcommand{\M}{\mathbb M}
\newcommand{\E}{\mathbb E}
\newtheorem{theorem}{Theorem}
\newtheorem{df}{Definition}
\newtheorem{lemma}{Lemma}
\newtheorem{corollary}{Corollary}
\newtheorem{problem}{Problem}
\newtheorem{proposition}{Proposition}
\begin{document}

\title{On t-normed integrals with respect to possibility  capacities on compacta}

%(revisited version 1 (August 2021))

\author{Taras Radul}

\maketitle

Institute of Mathematics, Casimirus the Great University of Bydgoszcz, Poland;
\newline
Department of Mechanics and Mathematics, Ivan Franko National University of Lviv,
Universytettska st., 1. 79000 Lviv, Ukraine.
\newline
e-mail: tarasradul@yahoo.co.uk

\textbf{Key words and phrases:}  Possibility capacity, monad,  t-normed integral, convexity

\subjclass[MSC 2020]{18B30, 18C15, 28E10, 54B30}

\begin{abstract} Riesz Theorem establishes  a correspondence between the set of $\sigma$-additive regular Borel measures and the set of linear  positively defined functionals. We consider an idempotent analogue of this correspondence between possibility capacities and functionals preserving the maximum operation and t-norm operation using t-normed integrals.
 \end{abstract}

\maketitle

\section{Introduction}
Capacities (non-additive measures, fuzzy measures) were introduced by Choquet in \cite{Ch} as a natural generalization of additive measures. They found numerous applications (see for example \cite{EK},\cite{Gil},\cite{Sch}). Capacities on compacta were considered in \cite{Lin} where the important role plays the upper-semicontinuity property which  connects the capacity theory with the topological structure. Categorical and topological properties of spaces of upper-semicontinuous normed capacities on compact Hausdorff spaces were investigated in \cite{NZ}. In particular, there was built the capacity functor which is a functorial part of a capacity monad $\M$.

In fact, the most of applications of non-additive measures to game theory, decision making theory, economics etc deal not with measures as set functions  but with integrals which allow to obtain expected utility or expected pay-off.  Several types of integrals with respect to non-additive measures were developed for different purposes (see for example books \cite{Grab} and  \cite{Den}). Such integrals are called fuzzy integrals. The most known are the Choquet integral based on the addition and the multiplication operations \cite{Ch} and the Sugeno integral  based on the maximum and the minimum operations \cite{Su}. If we change the minimum operation by any t-norm, we obtain the  generalization of the Sugeno integral called t-normed integrals \cite{Sua}. One of the important problems of the fuzzy integrals theory is characterization of integrals as functionals on some function space (see for example subchapter 4.8 in \cite{Grab} devoted to characterizations of the Choquet integral and the Sugeno integral). Characterizations of t-normed integrals were discussed in \cite{CLM}, \cite{Rad} and \cite{R5}. In fact these theorems we can consider as non-additive and non-linear analogues of well-known  Riesz Theorem about a correspondence between the set of $\sigma$-additive regular Borel measures and the set of linear  positively defined functionals.

The class of all capacities contains an important subclass of possibility  capacities. By the definition a value of a possibility  capacity of union of two sets is equal to maximum of values of capacities of these sets.  We prove in Section 2 of this paper that the set of  t-normed integrals with respect to possibility  capacities is equal to the set of functionals which preserve the maximum and t-norms  operations which are considered in \cite{Sukh} under name  $\ast$-measures. Since the maximum operation in idempotent mathematics plays the role of addition, we can consider the maximum preserving property as an idempotent  analogue of additivity. Hence  we can consider possibility measures as idempotent analogue of probability measures and   the above mentioned set of  functionals as an idempotent  analogue of the set linear  positively defined functionals in Riesz Theorem.

Possibility capacities form a submonad of the capacity monad \cite{NH}. The structure of this monad is based on the maximum and minimum   operations. A monad on possibility measures based on the maximum and t-norm  operations was in fact considered in \cite{NR} (using a more general framework). Let us remark that not all such  monads can be extended to the  space of  all capacities \cite{Rad1}. On the other hand Zarichnyi proposed to use triangular norms to construct monads on the spaces of functionals which preserve the maximum and t-norms  operations \cite{Za}. We prove in Section 3 that the correspondence from Section 2 is an isomorphism of corresponding monads.

Let us remark that each monad structure leads to some abstract convexity \cite{R1}. Convexity structures are widely used to prove existence of fixed points and equilibria (see for example \cite{E}, \cite{BCh}, \cite{KZ}, \cite{Rad}, \cite{R3}, \cite{R4}). In Section 4  we consider convexity and barycenter map generated by a t-norm.

\section{Capacities:preliminaries} In what follows, all spaces are assumed to be compacta (compact Hausdorff space) except for $\R$ and the spaces of continuous functions on a compactum. All maps are assumed to be continuous. By $\F(X)$ we denote the family of all closed subsets of a compactum $X$. We shall denote the
Banach space of continuous functions on a compactum  $X$ endowed with the sup-norm by $C(X)$. For any $c\in\R$ we shall denote the
constant function on $X$ taking the value $c$ by $c_X$. We also consider the natural lattice operations $\vee$ and $\wedge$ on $C(X)$ and  its sublattices $C(X,[0,+\infty))$ and $C(X,[0,1])$.

We need the definition of capacity on a compactum $X$. We follow a terminology of \cite{NZ}.

\begin{df} A function $\nu:\F(X)\to [0,1]$  is called an {\it upper-semicontinuous capacity} on $X$ if the three following properties hold for each closed subsets $F$ and $G$ of $X$:

1. $\nu(X)=1$, $\nu(\emptyset)=0$,

2. if $F\subset G$, then $\nu(F)\le \nu(G)$,

3. if $\nu(F)<a$ for $a\in[0,1]$, then there exists an open set $O\supset F$ such that $\nu(B)<a$ for each compactum $B\subset O$.
\end{df}

If $F$ is a one-point set we use a simpler notation $\nu(a)$ instead $\nu(\{a\})$.
A capacity $\nu$ is extended in \cite{NZ} to all open subsets $U\subset X$ by the formula $$\nu(U)=\sup\{\nu(K)\mid K \text{ is a closed subset of }  X \text{ such that } K\subset U\}.$$

It was proved in \cite{NZ} that the space $MX$ of all upper-semicontinuous  capacities on a compactum $X$ is a compactum as well, if a topology on $MX$ is defined by a subbase that consists of all sets of the form $O_-(F,a)=\{c\in MX\mid c(F)<a\}$, where $F$ is a closed subset of $X$, $a\in [0,1]$, and $O_+(U,a)=\{c\in MX\mid c(U)>a\}$, where $U$ is an open subset of $X$, $a\in [0,1]$. Since all capacities we consider here are upper-semicontinuous, in the following we call elements of $MX$ simply capacities.

\begin{df}\label{pos} A capacity $\nu\in MX$ for a compactum $X$ is called  a necessity (possibility) capacity if for each family $\{A_t\}_{t\in T}$ of closed subsets of $X$ (such that $\bigcup_{t\in T}A_t$ is a closed subset of $X$) we have $$\nu(\bigcap_{t\in T}A_t)=\inf_{t\in T}\nu(A_t)$$  $$(\nu(\bigcup_{t\in T}A_t)=\sup_{t\in T}\nu(A_t)).$$
\end{df}
(See \cite{WK} for more details.)

We denote by $NX$ ($\Pi X$) a subspace of $MX$ consisting of all necessity (possibility) capacities. Since $X$ is compact and $\nu$ is upper-semicontinuous, $\nu\in NX$ iff $\nu$ satisfies the simpler requirement that $\nu(A\cap B)=\min\{\nu(A),\nu(B)\}$.

If $\nu$ is a capacity on a compactum $X$, then  the function $\kappa X(\nu)$, that is defined on the family $\F(X)$  by the formula $\kappa X(\nu)(F) = 1-\nu (X\setminus F)$, is a capacity as well. It is called the dual
capacity (or conjugate capacity ) to $\nu$. The mapping $\kappa X : MX \to MX$ is a homeomorphism and an involution \cite{NZ}. Moreover, $\nu$ is a necessity capacity if and only if $\kappa X(\nu)$ is a possibility capacity. This implies in particular that $\nu\in \Pi X$ iff $\nu$ satisfies the simpler requirement that $\nu(A\cup B)=\max\{\nu(A),\nu(B)\}$. It is easy to check that $NX$ and $\Pi X$ are closed subsets of $MX$.

\section{t-normed integrals with respect to possibility  capacities} Remind that a triangular norm $\ast$ is a binary operation on the closed unit interval $[0,1]$ which is associative, commutative, monotone and $s\ast 1=s$ for each  $s\in [0,1]$ \cite{PRP}. Let us remark that the monotonicity of $\ast$ implies distributivity, i.e. $(t\vee s)\ast l=(t\ast l)\vee (s\ast l)$ for each $t$, $s$, $l\in[0,1]$.  We consider only continuous t-norms in this paper.

Integrals generated  by  t-norms are called t-normed integrals and were studied in \cite{We1}, \cite{We2} and \cite{Sua}. Denote $\varphi_t=\varphi^{-1}([t,1])$ for each $\varphi\in C(X,[0,1])$ and $t\in[0,1]$. So, for a continuous t-norm $\ast$, a capacity $\mu$ and a  function $f\in  C(X,[0,1])$ the corresponding t-normed integral is defined by the formula $$\int_X^{\vee\ast} fd\mu=\max\{\mu(f_t)\ast t\mid t\in[0,1]\}.$$

Let $X$ be a compactum.  We call two functions $\varphi$, $\psi\in C(X,[0,1])$ comonotone (or equiordered) if $(\varphi(x_1)-\varphi(x_2))\cdot(\psi(x_1)-\psi(x_2))\ge 0$ for each $x_1$, $x_2\in X$. Let us remark that a constant function is comonotone to any function $\psi\in C(X,[0,1])$.

Let $\ast$ be a continuous t-norm. We denote for a compactum $X$ by $\T^\ast(X)$ the set of functionals $\mu:C(X,[0,1])\to[0,1]$ which satisfy the conditions:

\begin{enumerate}
\item $\mu(1_X)=1$;
\item $\mu(\psi\vee\varphi)=\mu(\psi)\vee\mu(\varphi)$ for each comonotone functions $\varphi$, $\psi\in C(X,[0,1])$;
\item $\mu(c_X\ast\varphi)=c\ast\mu(\varphi)$ for each $c\in[0,1]$ and $\varphi\in C(X,[0,1])$.

\end{enumerate}

It was proved  in \cite{R5} for the general case  that  a   functional $\mu$ on  $C(X,[0,1])$ belongs to $\T^\ast(X)$ if and only if there exists a unique capacity $\nu$ such that $\mu$ is the t-normed integral with respect to $\nu$.

Following \cite{Sukh} we call a functional $\mu\in\T^\ast(X)$ a $\ast$-measure if

\begin{enumerate}
\item $\mu(1_X)=1$;
\item $\mu(\psi\vee\varphi)=\mu(\psi)\vee\mu(\varphi)$ for each  functions $\varphi$, $\psi\in C(X,[0,1])$;
\item $\mu(c_X\ast\varphi)=c\ast\mu(\varphi)$ for each $c\in[0,1]$ and $\varphi\in C(X,[0,1])$.

\end{enumerate}

We consider $\T^\ast(X)$ as a subspace of  the space $[0,1]^{C(X,[0,1])}$ with the product topology. We denote by $A^\ast(X)$) the subspace of all $\ast$-measures in $\T^\ast(X)$.

\begin{theorem}\label{Charac} Let $\mu\in \T^\ast(X)$. Then $\mu\in A^\ast(X)$ if and only if there exists a unique $\nu\in \Pi X$ such that  $\mu(\varphi)=\int_X^{\vee\ast} fd\nu$ for each $f\in  C(X,[0,1])$.
\end{theorem}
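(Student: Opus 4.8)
The plan is to lean on the characterization from \cite{R5}: since $\mu\in\T^\ast(X)$, there is already a unique capacity $\nu$ on $X$ with $\mu(\varphi)=\int_X^{\vee\ast}\varphi\,d\nu$ for all $\varphi\in C(X,[0,1])$. Uniqueness is therefore automatic, and the entire content of the theorem reduces to the equivalence that $\mu$ satisfies the \emph{unrestricted} maximum law $\mu(\psi\vee\varphi)=\mu(\psi)\vee\mu(\varphi)$ (for all $\varphi,\psi$, not merely comonotone ones) if and only if $\nu\in\Pi X$, i.e. $\nu(A\cup B)=\nu(A)\vee\nu(B)$ for all closed $A,B\subset X$. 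I would prove the two implications separately, noting that conditions (1) and (3) of a $\ast$-measure are inherited from $\mu\in\T^\ast(X)$.

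For the implication $\nu\in\Pi X\Rightarrow\mu\in A^\ast(X)$ the key observation is the level-set identity $(\varphi\vee\psi)_t=\varphi_t\cup\psi_t$, valid for every $t\in[0,1]$ because $\max(\varphi(x),\psi(x))\ge t$ holds iff $\varphi(x)\ge t$ or $\psi(x)\ge t$. Applying the possibility property gives $\nu((\varphi\vee\psi)_t)=\nu(\varphi_t)\vee\nu(\psi_t)$, and then the distributivity of $\ast$ over $\vee$ noted in the text yields $\nu((\varphi\vee\psi)_t)\ast t=(\nu(\varphi_t)\ast t)\vee(\nu(\psi_t)\ast t)$. Taking the maximum over $t\in[0,1]$ and interchanging the two suprema gives $\int_X^{\vee\ast}(\varphi\vee\psi)\,d\nu=\left(\int_X^{\vee\ast}\varphi\,d\nu\right)\vee\left(\int_X^{\vee\ast}\psi\,d\nu\right)$, which is exactly the unrestricted maximum law.

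The converse rests on a recovery formula expressing $\nu$ back from $\mu$, namely
$$\nu(F)=\inf\{\mu(\varphi)\mid\varphi\in C(X,[0,1]),\ \varphi|_F=1\}\qquad(F\in\F(X)).$$
The inequality $\mu(\varphi)\ge\nu(F)$ whenever $\varphi|_F=1$ is immediate, since then $F\subset\varphi_1$ and $\mu(\varphi)\ge\nu(\varphi_1)\ast 1=\nu(\varphi_1)\ge\nu(F)$. For the reverse inequality I would fix $\e>0$ and use the upper-semicontinuity property (3) of the capacity: as $\nu(F)<\nu(F)+\e$, there is an open $O\supset F$ with $\nu(B)<\nu(F)+\e$ for every compact $B\subset O$. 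Choosing by Urysohn's lemma a $\varphi\in C(X,[0,1])$ with $\varphi|_F=1$ and $\varphi|_{X\setminus O}=0$, every level set $\varphi_t$ with $t>0$ lies in $O$, so $\nu(\varphi_t)\ast t\le\nu(\varphi_t)<\nu(F)+\e$, while the term at $t=0$ equals $\nu(X)\ast 0=0$; hence $\mu(\varphi)\le\nu(F)+\e$, and letting $\e\to 0$ gives the formula.

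With the recovery formula in hand, the implication $\mu\in A^\ast(X)\Rightarrow\nu\in\Pi X$ is short. Monotonicity of $\nu$ gives $\nu(A\cup B)\ge\nu(A)\vee\nu(B)$. For the reverse, fix $\e>0$ and choose $\varphi_A,\varphi_B\in C(X,[0,1])$ with $\varphi_A|_A=1$, $\varphi_B|_B=1$, $\mu(\varphi_A)<\nu(A)+\e$ and $\mu(\varphi_B)<\nu(B)+\e$. Since $(\varphi_A\vee\varphi_B)|_{A\cup B}=1$, the recovery formula gives $\nu(A\cup B)\le\mu(\varphi_A\vee\varphi_B)$, and the unrestricted maximum law yields $\mu(\varphi_A\vee\varphi_B)=\mu(\varphi_A)\vee\mu(\varphi_B)<(\nu(A)\vee\nu(B))+\e$; letting $\e\to 0$ finishes the argument. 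I expect the main obstacle to be the careful verification of the recovery formula, in particular controlling all level sets $\varphi_t$ simultaneously through the upper-semicontinuity property, since this is precisely where the topology of $X$ and the definition of the t-normed integral interact.
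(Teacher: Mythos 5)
Your proposal is correct and follows essentially the same route as the paper's own proof: the sufficiency direction uses the identical level-set identity $(\varphi\vee\psi)_t=\varphi_t\cup\psi_t$ plus distributivity of $\ast$ over $\vee$, and the necessity direction rests on the same recovery formula $\nu(F)=\inf\{\mu(\varphi)\mid\varphi\ge\chi_F\}$. The only differences are cosmetic: the paper cites that formula from \cite{Rad} while you reprove it via upper-semicontinuity and Urysohn's lemma, and you run the necessity argument as a direct $\varepsilon$-estimate where the paper argues by contradiction.
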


\begin{proof} Necessity.  We can choose any $\nu\in M(X)$ such that  $\mu(f)=\int_X^{\vee\ast} fd\nu$ for each $f\in  C(X,[0,1])$ by the above mentioned characterization of the t-normed integral from \cite{Rad}. Moreover, we have $$\nu(A)=\inf \{\mu(\varphi)\mid \varphi\in C(X,[0,1]) \text { with } \varphi\ge\chi_{A}\}$$ for each closed subset $A$ of $X$ \cite{Rad} (by $\chi_{A}$ we denote the characteristic function of the set $A$). We have to show that $\nu\in \Pi X$.

Suppose the contrary. Then there exist two closed subsets $A$ and $B$ of $X$ such that $\nu(A\cup B)>\nu(A)\vee\nu(B)$. We can choose functions $\varphi$, $\psi\in C(X,[0,1])$ such that $\varphi\ge\chi_{A}$, $\psi\ge\chi_{B}$ and $\nu(A\cup B)>\mu(\varphi)\vee\mu(\psi)$.  Since $\mu\in A^\ast(X)$, we have $\mu(\varphi)\vee\nu(\psi)=\mu(\varphi\vee\psi)$. But $\varphi\vee\psi\ge \chi_{A\cup B}$, hence $\mu(\varphi\vee\psi)>\nu(A\cup B)$ and we obtain a contradiction.

Sufficiency. Let $\nu\in \Pi X$ such that  $\mu(f)=\int_X^{\vee\ast} fd\nu$ for each $f\in  C(X,[0,1])$.  Take any functions $\varphi$, $\psi\in C(X,[0,1])$.
Evidently, we have $(\varphi\vee\psi)_t=\varphi_t\cup\psi_t$ for each $t\in[0,1]$. Since $\nu\in \Pi X$, we obtain  $\nu(\varphi\vee\psi)_t)\ast t=(\nu(\varphi_t)\vee\nu(\psi_t))\ast t$. Since $\ast$ is distributive, we have $(\nu(\varphi_t)\vee\nu(\psi_t))\ast t=(\nu(\varphi_t)\ast t)\vee(\nu(\psi_t)\ast t)\le \int_X^{\vee\ast} \varphi d\nu\vee \int_X^{\vee\ast} \psi d\nu$. Hence $\int_X^{\vee\ast} \varphi\vee\psi d\nu\le\int_X^{\vee\ast} \varphi d\nu\vee \int_X^{\vee\ast} \psi d\nu$. Inverse inequality follows from the obvious monotonicity of t-normed integral. Hence $\mu\in A^\ast(X)$.
\end{proof}

\section{A morphism of monads} The main aim of this section is to show that the correspondence obtained in the previous section is a monad morphism. By $\Comp$ we denote the category of compact Hausdorff
spaces (compacta) and continuous maps. We recall the notion  of monad (or triple) in the sense of S.Eilenberg and J.Moore \cite{EM}.  We define it only for the category $\Comp$.

A {\it monad} \cite{EM} $\E=(E,\eta,\mu)$ in the category $\Comp$ consists of an endofunctor $E:{\Comp}\to{\Comp}$ and natural transformations $\eta:\Id_{\Comp}\to F$ (unity), $\mu:F^2\to F$ (multiplication) satisfying the relations $$\mu\circ E\eta=\mu\circ\eta E=\text{\bf 1}_E$$ and $$\mu\circ\mu E=\mu\circ E\mu.$$ (By $\Id_{\Comp}$ we denote the identity functor on the category ${\Comp}$ and $E^2$ is the superposition $E\circ E$ of $E$.)

For a continuous map of compacta $f:X\to Y$ we define the map $f:\Pi X\to \Pi Y$ by the formula $\Pi f(\nu)(A)=\nu(f^{-1}(A))$ where $\nu\in \Pi X$ and $A\in\F(Y)$. The map $\Pi f$ is continuous.  In fact, this extension of the construction $\Pi$ defines the capacity functor $\Pi$ in the category $\Comp$ (see \cite{NH}  for more details).

The functor $\Pi$ was completed to the monad $\U_\ast=(\Pi,\eta,\mu_\ast)$ (where $\ast$ is a continuous t-norm) in \cite{NR}, where the components of the  natural transformations are defined as follows:
$$
\eta X(x)(F)=\begin{cases}
1,&x\in F,\\
0,&x\notin F;\end{cases}
$$

For a closed set $F\subset X$ and for $t\in [0,1]$ put $F_t=\{c\in MX\mid c(F)\ge t\}$. Define the map $\mu X:\Pi^2 X\to \Pi X$  by the formula $$\mu X(\C)(F)=\max\{\C(F_t)\ast t\mid t\in(0,1]\}.$$ (Existing of $\max$ follows from Lemma 3.7 \cite{NZ}.)

Zarichnyi proposed to  construct monads on the spaces of $\ast$-measures \cite{Za}. The components of such monad when $\ast=\wedge$  were described and studied in detail in \cite{FZ}. Let us describe it in general case.

For a map $\phi\in C(X,[0,1])$ we denote by $\pi_\phi$ or $\pi(\phi)$ the
corresponding projection $\pi_\phi:A^\ast X\to I$. For each map $f:X\to Y$
we define the map $A^\ast f:A^\ast X\to A^\ast Y$ by the formula
$\pi_\phi\circ A^\ast f=\pi_{\phi\circ f}$ for $\phi\in C(Y,[0,1])$. It is easy to check that the map $A^\ast f$ is well defined and continuous. Hence $A^\ast$ forms an endofunctor on $\Comp$ (see \cite{Sukh} for more details).
For a compactum $X$ we define components $hX$ and $mX$ of natural transformations $h:\Id_{\Comp}\to A^\ast$, $m:(A^\ast)^2\to A^\ast$ by $\pi_\phi\circ hX=\phi$ and $\pi_\phi\circ m X=\pi(\pi_\phi)$ for all $\phi\in C(X,[0,1])$). It is easy to check that the maps $hX$ and $mX$ are well defined, continuous for each compactum $X$ and are components of corresponding natural transformations. Let us remark that for each $x\in X$ the $\ast$-measure $hX(x)$ is the Dirac measure concentrated at the point $x$ and we denote   $hX(x)=\delta_x$.

%Let us check the naturality of $h$ and $m$. Let $f:X\to Y$ be a map.
%Then we have $\pi_\phi\circ hY\circ f=\phi\circ f=\pi_{\phi\circ f}\circ hX=
%\pi_\phi\circ V(f)\circ hX$ and $\pi_\phi\circ mY\circ VVf=\pi(\pi_\phi)\circ
%VVf=\pi(\pi_\phi\circ Vf)=
%\pi(\pi_{\phi\circ f})=
%\pi_{\phi\circ f}\circ mX=
%\pi_\phi\circ Vf\circ mX$ for each $\phi\in CY$. Hence $h$ and $m$ are natural
%transformation.

\begin{proposition}\label{monad} The triple $\A^\ast=(A^\ast,h,m)$ is a monad on $\Comp$.
\end{proposition}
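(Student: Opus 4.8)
The plan is to exploit that $A^\ast X$ is by construction a subspace of $[0,1]^{C(X,[0,1])}$ with the product topology, so two maps into an $A^\ast$-space coincide precisely when they agree after composition with every coordinate projection $\pi_\phi$. Since naturality of $h$ and $m$ and the well-definedness and continuity of their components have already been recorded above, the only thing left is to verify the three monad identities $m\circ hA^\ast=m\circ A^\ast h=\mathbf 1_{A^\ast}$ and $m\circ mA^\ast=m\circ A^\ast m$. Each of these can be checked one coordinate $\pi_\phi$ at a time, using only the two defining relations $\pi_\phi\circ hX=\phi$ and $\pi_\phi\circ mX=\pi(\pi_\phi)$ together with the functorial rule $\pi_\phi\circ A^\ast f=\pi_{\phi\circ f}$.

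First I would treat the unit laws. For the identity $m\circ hA^\ast=\mathbf 1_{A^\ast}$ I fix $\phi\in C(X,[0,1])$ and compute $\pi_\phi\circ mX\circ h(A^\ast X)$: applying $\pi_\phi\circ mX=\pi_{\pi_\phi}$ and then the defining relation for $h$ on the space $A^\ast X$, namely $\pi_\psi\circ h(A^\ast X)=\psi$ with $\psi=\pi_\phi$, collapses this to $\pi_\phi$, which is exactly $\pi_\phi\circ\mathbf 1_{A^\ast X}$. For the identity $m\circ A^\ast h=\mathbf 1_{A^\ast}$ I compute $\pi_\phi\circ mX\circ A^\ast(hX)$: here $\pi_\phi\circ mX=\pi_{\pi_\phi}$ and then the functorial rule gives $\pi_{\pi_\phi}\circ A^\ast(hX)=\pi_{\pi_\phi\circ hX}=\pi_\phi$, again recovering the identity. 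Since both composites equal $\pi_\phi$ for every $\phi$, they agree with $\mathbf 1_{A^\ast X}$.

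Next I would verify associativity by computing $\pi_\phi$ of both composites and checking that they reduce to the same coordinate. For the left-hand side $mX\circ m(A^\ast X)$ I obtain $\pi_\phi\circ mX=\pi_{\pi_\phi}$ and then, applying the defining relation for $m$ on the space $A^\ast X$, $\pi_{\pi_\phi}\circ m(A^\ast X)=\pi_{\pi_{\pi_\phi}}$. For the right-hand side $mX\circ A^\ast(mX)$ I obtain $\pi_\phi\circ mX=\pi_{\pi_\phi}$ followed by the functorial rule $\pi_{\pi_\phi}\circ A^\ast(mX)=\pi_{\pi_\phi\circ mX}=\pi_{\pi_{\pi_\phi}}$. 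Both sides therefore equal $\pi_{\pi_{\pi_\phi}}$, so the two composites coincide.

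The whole argument is formal; the only real care needed, and the place where it is easiest to slip, is the bookkeeping of which space each symbol $\pi$ refers to. The same notation $\pi_\phi$ denotes a coordinate projection on $A^\ast X$, on $(A^\ast)^2 X$, and on $(A^\ast)^3 X$ at different stages, and one must keep straight the level of iteration of $A^\ast$ at which each defining relation is being invoked. Beyond this indexing discipline there is no genuine obstacle, since every step is a direct substitution from the three defining formulas.
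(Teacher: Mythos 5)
Your proposal is correct and follows essentially the same route as the paper's own proof: both verify the unit laws and associativity coordinatewise, composing with each projection $\pi_\phi$ and reducing via the defining relations $\pi_\phi\circ hX=\phi$, $\pi_\phi\circ mX=\pi(\pi_\phi)$ and $\pi_\phi\circ A^\ast f=\pi_{\phi\circ f}$. Your write-up is in fact somewhat cleaner than the paper's (which contains a couple of notational slips), and your closing remark about tracking which level of iteration each $\pi$ lives at is exactly the right caution.
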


\begin{proof} We have to check that the natural transformations $h$ and $m$ satisfy the monad definition.

The equality $mX\circ hVX=mX\circ VhX=\id_{VX}$ follows from
the next two equalities: $
\pi_\phi\circ A^\ast X\circ hA^\ast X=
\pi(\pi_\phi)\circ hA^\ast X=
\pi_\phi=
\pi_\phi\circ\id_{A^\ast X}$ and
$\pi_\phi\circ mX\circ A^\ast hX=
\pi(\pi_\phi)\circ A^\ast hX=
\pi(\pi_\phi\circ hX)=
\pi_\phi=
\pi_\phi\circ\id_{A^\ast X}$.

The equality $mX\circ A^\ast mX=mX\circ mA^\ast X$ follows from the equality
$\pi_\phi\circ mX\circ A^\ast mX=
\pi(\pi_\phi)\circ A^\ast mX=
\pi(\pi_\phi\circ mX)=
\pi(\pi(\pi_\phi))=
\pi(\pi_\phi)\circ mA^\ast X=
\pi_\phi\circ mX\circ mA^\ast X$ for each $\phi\in C(X)$.
The proposition is proved.
\end{proof}

Let us remark that a partial case of the above proposition for $\ast=\wedge$ was proved in \cite{FZ}.

A natural transformation $\psi:E\to E'$ is called a {\it morphism}
from a monad $\E=(E,\eta,\mu)$ into a monad $\E'=(E',\eta',\mu')$
if $\psi\circ\eta= \eta'$ and $\psi\circ\mu=\mu'\circ\psi E'\circ
E\psi$. A monad morphism $\psi:E\to E'$ is called an isomorphism if it has an inverse morphism of monads. It is easy to check that in $\Comp$ a monad morphism $\psi$ is an isomorphism   if each its  component $\psi X$ is a homeomorphism.

For a compactum $X$ let us define a map $lX:\Pi X\to A^\ast X$ by the formula $lX(\nu)(\varphi)=\int_X^{\vee\ast} \varphi d\nu$.

\begin{proposition} The map $lX$ is a homeomorphism.
\end{proposition}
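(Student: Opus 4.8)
The plan is to show that $lX$ is a well-defined continuous bijection between compacta, from which the homeomorphism property follows automatically. Let me think through the structure.

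First, I need to check $lX$ is well-defined, i.e., that $lX(\nu)$ really lands in $A^\ast X$. Given $\nu \in \Pi X$, Theorem \ref{Charac} (sufficiency direction) tells us exactly that the functional $\varphi \mapsto \int_X^{\vee\ast}\varphi\, d\nu$ is a $\ast$-measure. So $lX(\nu) \in A^\ast X$, and well-definedness is immediate from the previous section.

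Next, continuity. Since $A^\ast X$ carries the product topology inherited from $[0,1]^{C(X,[0,1])}$, it suffices to check that for each fixed $\varphi \in C(X,[0,1])$ the composite $\pi_\varphi \circ lX : \Pi X \to [0,1]$, namely $\nu \mapsto \max\{\nu(\varphi_t)\ast t \mid t \in [0,1]\}$, is continuous. This reduces to continuity of $\nu \mapsto \nu(\varphi_t)$ in the topology on $MX$ (generated by the subbasic sets $O_-(F,a)$ and $O_+(U,a)$) together with continuity of $\ast$ and of the $\max$ operation; this is the kind of argument already used in \cite{NZ, NR} to show t-normed integrals depend continuously on the capacity.

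**Bijectivity is the main point.** Injectivity follows from the uniqueness clause in Theorem \ref{Charac}: distinct possibility capacities yield distinct integrals. Surjectivity is the crux: given any $\mu \in A^\ast X$, Theorem \ref{Charac} produces a (unique) $\nu \in \Pi X$ with $\mu(\varphi)=\int_X^{\vee\ast}\varphi\, d\nu$, i.e. $\mu = lX(\nu)$. So $lX$ is a continuous bijection, and here is the key step: both $\Pi X$ and $A^\ast X$ are compact Hausdorff spaces (being closed subspaces of the compacta $MX$ and $[0,1]^{C(X,[0,1])}$ respectively). A continuous bijection from a compactum to a Hausdorff space is automatically a homeomorphism, so $lX$ is a homeomorphism. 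The only genuine obstacle is verifying the continuity of the map $\nu \mapsto \nu(\varphi_t)$ uniformly enough in $t$ to pass through the $\max$; but since $\varphi$ is fixed and continuous and $\ast$ is continuous, and existence of the maximum is guaranteed by the upper-semicontinuity/compactness considerations (cf. Lemma 3.7 of \cite{NZ}), this goes through by a routine compactness argument.
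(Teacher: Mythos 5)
Your overall route is the same as the paper's: Theorem~\ref{Charac} gives well-definedness and bijectivity, one then establishes continuity of $lX$, and the homeomorphism property follows since $\Pi X$ is compact and $A^\ast X$ is Hausdorff (the paper leaves this last step implicit and simply cites \cite[Lemma 4]{Rad} for continuity). Your handling of well-definedness, injectivity, surjectivity and the compact-to-Hausdorff argument is correct and matches the paper.

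However, your sketch of the continuity step contains a genuine error. You claim the problem ``reduces to continuity of $\nu \mapsto \nu(\varphi_t)$'' on $\Pi X$. That map is not continuous: in the topology of $MX$, evaluation on a fixed closed set $F$ is only upper semicontinuous --- the subbasic sets $O_-(F,a)$ make $\{\nu \mid \nu(F)<a\}$ open, but $\{\nu \mid \nu(F)>a\}$ need not be open; openness of sets of that second type is available only for evaluations on \emph{open} subsets, via $O_+(U,a)$. Moreover, even if each map $\nu \mapsto \nu(\varphi_t)\ast t$ were continuous, a pointwise maximum over the uncountable family $t\in[0,1]$ of continuous functions need not be continuous, so ``continuity of the max'' is not a routine closing step. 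The actual proof of continuity of $\nu \mapsto \int_X^{\vee\ast}\varphi\, d\nu$ (the content of \cite[Lemma 4]{Rad}, which the paper invokes) must exploit both halves of the subbase: upper semicontinuity through the closed sets $\varphi_t=\varphi^{-1}([t,1])$ and lower semicontinuity through the open sets $\varphi^{-1}((t,1])$, using the monotone dependence of $\varphi_t$ on $t$. As written, your reduction would fail if carried out literally; the remainder of your argument stands.
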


\begin{proof} Theorem \ref{Charac} implies that the map $lX$ is well defined and  bijective.
The continuity of $lX$ follows from \cite[Lemma 4]{Rad}.
\end{proof}

By $l$ we denote the natural transformation with the components $lX$.

\begin{theorem} The natural transformation $l$ is an isomorphism of monads $\U_\ast$ and $\A^\ast$.
\end{theorem}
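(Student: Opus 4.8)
The plan is to use the remark preceding the statement: each component $lX$ is a homeomorphism by the previous proposition, so it suffices to check that $l$ is a morphism of monads, i.e. that $l\circ\eta=h$ and $l\circ\mu_\ast=m\circ lA^\ast\circ\Pi l$; an inverse monad morphism then exists automatically. The naturality of $l$ rests on the change-of-variables identity for the t-normed integral: for $f\colon X\to Y$, $\nu\in\Pi X$ and $\phi\in C(Y,[0,1])$ one has $(\phi\circ f)_t=f^{-1}(\phi_t)$, whence $\int_X^{\vee\ast}(\phi\circ f)\,d\nu=\max_t\nu(f^{-1}(\phi_t))\ast t=\int_Y^{\vee\ast}\phi\,d(\Pi f(\nu))$, which is precisely $A^\ast f\circ lX=lY\circ\Pi f$ read off through the projection $\pi_\phi$. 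For the unit law, fix $x\in X$ and $\phi$; since $\eta X(x)(\phi_t)$ is $1$ for $t\le\phi(x)$ and $0$ otherwise, and $1\ast t=t$, $0\ast t=0$, the integral $\int_X^{\vee\ast}\phi\,d(\eta X(x))$ collapses to $\max_{t\le\phi(x)}t=\phi(x)=\pi_\phi(hX(x))$, so $lX\circ\eta X=hX$.

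The heart of the argument is the multiplication square $lX\circ\mu X=mX\circ l(A^\ast X)\circ\Pi(lX)$. Evaluating the right-hand side on $\C\in\Pi^2X$ through $\pi_\phi$ and using $\pi_\phi\circ mX=\pi(\pi_\phi)$ together with the definition of $l(A^\ast X)$ rewrites it as $\int_{A^\ast X}^{\vee\ast}\pi_\phi\,d(\Pi(lX)(\C))$; applying the change-of-variables identity above to the map $lX$ turns this into the iterated integral $\int_{\Pi X}^{\vee\ast}\hat\phi\,d\C$, where $\hat\phi(\nu)=\int_X^{\vee\ast}\phi\,d\nu$ is continuous by \cite[Lemma 4]{Rad}. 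Thus the square is equivalent to the Fubini-type identity
$$\int_X^{\vee\ast}\phi\,d(\mu X(\C))=\int_{\Pi X}^{\vee\ast}\hat\phi\,d\C,$$
and the whole proof reduces to this equality.

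To establish it I would unfold both sides via the explicit formula for $\mu$. Using associativity and the distributivity of $\ast$ over the attained maxima, the left side equals $\max_{t,s}\C((\phi_t)_s)\ast(s\ast t)$, while the right side equals $\max_t\C(\hat\phi_t)\ast t$ with $\hat\phi_t=\{\nu\mid\int_X^{\vee\ast}\phi\,d\nu\ge t\}$. The inequality ``$\le$'' follows from the inclusion $(\phi_t)_s\subseteq\hat\phi_{s\ast t}$ (if $c(\phi_t)\ge s$ then $\hat\phi(c)\ge c(\phi_t)\ast t\ge s\ast t$), which by monotonicity of $\C$ and of $\ast$ gives $\C((\phi_t)_s)\ast(s\ast t)\le\C(\hat\phi_{s\ast t})\ast(s\ast t)\le\max_u\C(\hat\phi_u)\ast u$. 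For ``$\ge$'' I would use that the maximum defining $\hat\phi$ is attained, so that $\hat\phi_t=\bigcup_{r\ge t}(\phi_r)_{a(r,t)}$, where $a(r,t)=\min\{a\mid a\ast r\ge t\}$ is well defined by continuity and monotonicity of $\ast$; since $\hat\phi_t$ is closed and $\C$ is a possibility capacity, $\C(\hat\phi_t)=\sup_{r\ge t}\C((\phi_r)_{a(r,t)})$, and for $t>0$ the relation $a(r,t)\ast r\ge t$ yields $\C((\phi_r)_{a(r,t)})\ast t\le\C((\phi_r)_{a(r,t)})\ast a(r,t)\ast r$, a term of the left side; passing to the supremum (again using continuity of $\ast$) gives ``$\ge$'', the case $t=0$ being trivial.

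I expect the decisive step to be this last identity, and within it the ``$\ge$'' direction: it is exactly here that the possibility property of $\C$ is indispensable, being what converts the capacity of the superlevel set $\hat\phi_t$ of the integral into the supremum over the cylinders $(\phi_r)_{a(r,t)}$ that figure in the definition of $\mu$. The surrounding bookkeeping---interchanging $\ast$ with attained maxima and suprema, legitimate because $\ast$ is continuous and monotone---should be routine.
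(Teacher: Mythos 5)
Your proposal is correct and follows essentially the same route as the paper: both reduce the multiplication square, via the projections $\pi_\phi$ and the pushforward/change-of-variables identity, to the Fubini-type equality $\int_X^{\vee\ast}\phi\, d(\mu X(\C))=\int_{\Pi X}^{\vee\ast}\hat\phi\, d\C$, and both prove it by decomposing the superlevel set $\hat\phi_t$ as a union over $l$ of the closed sets $\{\nu\mid \nu(\phi_l)\ast l\ge t\}$, applying the possibility property of $\C$, and reparametrizing with the pseudo-inverse $b(t,l)=\inf\{s\mid s\ast l\ge t\}$ (your $a(r,t)$). The only differences are organizational: you split the key identity into two inequalities (and are somewhat more careful about attainment of maxima and the sup/max interchange, which the paper's chain of equalities glosses over in its substitution step), and you verify naturality of $l$ explicitly, which the paper leaves implicit.
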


\begin{proof} Consider any compactum $X$. We have to check equalities $lX\circ\eta X= hX$ and $lX\circ\mu X=mX\circ lA^*X\circ
\Pi(lX)$.

Take any $x\in X$ and $f\in C(X,[0,1])$. Then we have $$\pi_f\circ lX\circ\eta X(x)=\int_X^{\vee\ast} fd\eta X(x)=\max\{\eta X(x)(f_t)\ast t\mid t\in[0,1]\}=$$ $$=1\ast f(x)=\pi_f\circ hX(x).$$

Now, consider any $\C\in \Pi^2 X$ and $f\in C(X,[0,1])$. Then we have
$$\pi_f\circ mX\circ lA^*X\circ \Pi(lX)(\C)=\pi(\pi_f)\circ lA^*X(\Pi(lX)(\C))=$$
$$=\int_{A^*X}^{\vee\ast} \pi_fd\Pi(lX)(\C)=\max\{\Pi(lX)(\C)((\pi_f)_t)\ast t\mid t\in[0,1]\}=$$
$$=\max\{\C(lX^{-1}((\pi_f)_t))\ast t\mid t\in[0,1]\}=\max\{\C(\{\nu\in\Pi X\mid\int_X^{\vee\ast} fd\nu\ge t\})\ast t\mid t\in[0,1]\}=$$
$$=\max\{\C(\{\nu\in\Pi X\mid\max\{\nu(f_l)\ast l\mid l\in[0,1]\}\ge t\})\ast t\mid t\in[0,1]\}=$$
$$=\max\{\C(\{\nu\in\Pi X\mid\text{ there exists }l\in[0,1] \text{ such that } \nu(f_l)\ast l\ge t\})\ast t\mid t\in[0,1]\}=$$
$$\text{ since  }\C\text{ is a possibility capacity  }$$
$$=\max\{\max\{\C(\{\nu\in\Pi X\mid\nu(f_l)\ast l\ge t\})\mid l\in[0,1]\})\ast t\mid t\in[0,1]\}.$$

Consider any $t,$ $l\in[0,1]$ and $\nu\in\Pi X$ such that $\nu(f_l)\ast l\ge t$. Then we have $t\le l$ by monotonicity $\ast$. Put $b(t,l)=\inf\{s\in[0,1]\mid t\le s\ast l\}$. It follows from continuity of $\ast$ that $l\ast b(t,l)=t$. Moreover, we have $k\ast l\ge t$ iff $k\ge b(t,l)$ for each $k\in [0,1]$.

So, we obtain
$$\max\{\max\{\C(\{\nu\in\Pi X\mid\nu(f_l)\ast l\ge t\})\mid l\in[0,1]\})\ast t\mid t\in[0,1]\}=$$
$$=\max\{\max\{\C(\{\nu\in\Pi X\mid\nu(f_l)\ge b(t,l)\})\ast l\ast b(t,l)\mid l\in[0,1]\})\mid t\in[0,1]\}=$$
$$=\max\{\max\{\C(\{\nu\in\Pi X\mid\nu(f_l)\ge s\})\ast l\ast s\mid l\in[0,1]\})\mid s\in[0,1]\}=$$
$$=\max\{\max\{\C((f_l)_s)\ast s\mid s\in[0,1]\})\ast l\mid l\in[0,1]\}=\max\{\mu X(\C)(f_l)\ast l\mid l\in[0,1]\}=$$
$$=\int_X^{\vee\ast} fd\mu X(\C)=\pi_f\circ lX\circ\mu X(\C).$$

\end{proof}

\section{Convexity generated by a t-norm} Max-plus convex sets were introduced in \cite{Z} and found many applications (see for example \cite{BCh}). Well known is also max-min convexity. We generalize this convexity changing the  minimum operation by any continuous t-norm $\ast$.

Let $T$ be a set. Given $x, y \in [0,1]^T$ and $\lambda\in[0,1]$, we denote by $y\vee x$ the coordinatewise
maximum of $x$ and $y$ and by $\lambda\ast x$ the point with coordinates $\lambda\ast x_t$, $t\in T$. A subset $A$ in $[0,1]^T$ is said to be  max-$\ast$ convex if $\lambda\ast a\vee  b\in A$ for all $a, b\in A$ and $\lambda\in[0,1]$. It is easy to check that $A$  is   max-$\ast$ convex iff $\bigvee_{i=1}^n\lambda_i\ast x_i\in A$ for all $x_1,\dots, x_n\in A$ and $\lambda_1,\dots,\lambda_n\in[0,1]$ such that $\bigvee_{i=1}^n\lambda_i=1$. In the following by max-$\ast$ convex compactum we mean a max-$\ast$ convex compact subset of $[0,1]^T$. It was proved in \cite{Sukh} that the set $ A^\ast(K)$ is max-$\ast$ convex compact subset of $[0,1]^{C(X,[0,1])}$.

Let $K\subset  [0,1]^T$ be a compact max-$\ast$ convex subset. For each $t\in T$ we put $f_t=\pr_t|_K:K\to [0,1]$ where $\pr_t:[0,1]^T\to[0,1]$ is the natural projection.    Given $\mu\in A^\ast(K)$, the point $\beta_K(\mu)\in[0,1]^T$ is defined by the conditions $\pr_t(\beta_K(\mu))=\mu(f_t)$ for each $t\in T$.

\begin{proposition} We have $\beta_K(\mu)\in  K$ for each $\mu\in A^\ast(K)$ and the map $\beta_K : A^\ast(K)\to K$ is continuous.
\end{proposition}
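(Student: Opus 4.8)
The plan is to treat the two assertions separately. Continuity is immediate from the definitions: the space $A^\ast(K)$ carries the topology inherited from the product $[0,1]^{C(K,[0,1])}$, so each evaluation $\mu\mapsto\mu(f_t)=\pi_{f_t}(\mu)$ is continuous; and since $K\subset[0,1]^T$ also carries the product topology, a map into $K$ is continuous as soon as all its coordinate functions $\mu\mapsto\pr_t(\beta_K(\mu))=\mu(f_t)$ are. Hence $\beta_K$ is continuous, and the only real work is to prove the inclusion $\beta_K(\mu)\in K$.

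For membership I would first obtain an explicit representation of $\mu$. By Theorem~\ref{Charac} there is a unique $\nu\in\Pi K$ with $\mu(\varphi)=\int_K^{\vee\ast}\varphi\,d\nu$. Since $\nu$ is a possibility capacity, Definition~\ref{pos} applied to the decomposition of a closed set $A$ into its one-point subsets gives $\nu(A)=\sup_{x\in A}\nu(\{x\})$; writing $g(x)=\nu(\{x\})$, the upper semicontinuity of $\nu$ makes $g\colon K\to[0,1]$ upper semicontinuous with $\max_{x\in K}g(x)=\nu(K)=1$. A short computation, using that the suprema defining $\nu(\varphi^{-1}([s,1]))$ are attained on closed level sets and that $\ast$ is continuous and monotone, then yields the key formula
$$\mu(\varphi)=\max_{x\in K}\bigl(g(x)\ast\varphi(x)\bigr)\qquad(\varphi\in C(K,[0,1])).$$
In particular $\pr_t(\beta_K(\mu))=\mu(f_t)=\max_{x\in K}\bigl(g(x)\ast\pr_t(x)\bigr)$, and each such maximum is attained, since $x\mapsto g(x)\ast\pr_t(x)$ is upper semicontinuous on the compactum $K$.

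With this formula in hand I would verify membership by a finite max-$\ast$ approximation. As $K$ is closed in $[0,1]^T$, it suffices to show that every basic neighbourhood of $\beta_K(\mu)$, determined by finitely many coordinates $t_1,\dots,t_n$, meets $K$. For each $i$ choose $x^{(i)}\in K$ attaining $\max_{x\in K}\bigl(g(x)\ast\pr_{t_i}(x)\bigr)$, and choose $x_0\in K$ with $g(x_0)=1$. Set
$$z=x_0\vee\bigvee_{i=1}^n g(x^{(i)})\ast x^{(i)}.$$
The admissible weights $1,g(x^{(1)}),\dots,g(x^{(n)})$ have maximum $1$, so $z\in K$ by the characterization of max-$\ast$ convexity. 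For each $i$ one checks $\pr_{t_i}(z)=\pr_{t_i}(\beta_K(\mu))$: every summand defining $\pr_{t_i}(z)$ has the form $g(x)\ast\pr_{t_i}(x)$ and hence is at most $\pr_{t_i}(\beta_K(\mu))$, while the summand coming from $x^{(i)}$ equals it. Thus $z\in K$ agrees with $\beta_K(\mu)$ on $t_1,\dots,t_n$, the neighbourhood meets $K$, and closedness of $K$ gives $\beta_K(\mu)\in K$.

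The main obstacle is the representation step: one must verify the identity $\mu(\varphi)=\max_{x\in K}\bigl(g(x)\ast\varphi(x)\bigr)$ with care, confirming that the relevant suprema are genuinely attained (which is where upper semicontinuity of $g$ and continuity of $\ast$ enter) and that the possibility property $\nu(A)=\sup_{x\in A}\nu(\{x\})$ really does follow from Definition~\ref{pos}. Once this formula is secured, the inclusion of the auxiliary point $x_0$ with $g(x_0)=1$, needed to make the weights admissible for max-$\ast$ convexity, is the only genuine subtlety in an otherwise routine approximation, and the exact coincidence $\pr_{t_i}(z)=\pr_{t_i}(\beta_K(\mu))$ makes the closedness argument clean.
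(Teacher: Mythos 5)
Your proposal is correct, but it proves membership by a genuinely different route than the paper. The paper's proof is a two-line density argument: it cites from the literature (Sukhorukova) that the set $A^\ast_\omega(K)$ of finite combinations $\bigvee_{i=1}^n\lambda_i\ast\delta_{x_i}$ (with $\bigvee_i\lambda_i=1$) is dense in $A^\ast(K)$, computes directly that $\beta_K$ sends such a combination to $\bigvee_{i=1}^n\lambda_i\ast x_i\in K$, and then concludes by continuity of $\beta_K$ together with compactness (closedness) of $K$ that the whole image lies in $K$. You instead avoid the external density result: you invoke the paper's own Theorem~\ref{Charac} to represent $\mu$ by a possibility capacity $\nu\in\Pi K$, extract the upper semicontinuous density $g(x)=\nu(\{x\})$ with $\max g=1$, establish the formula $\mu(\varphi)=\max_{x\in K}\bigl(g(x)\ast\varphi(x)\bigr)$, and then approximate $\beta_K(\mu)$ coordinatewise: for any finitely many coordinates you build an exact match $z=x_0\vee\bigvee_i g(x^{(i)})\ast x^{(i)}\in K$ (the point $x_0$ with $g(x_0)=1$ correctly normalizes the weights), so closedness of $K$ in $[0,1]^T$ finishes the argument. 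I checked your key formula: both inequalities follow exactly as you indicate (for $\le$ use $\nu(\varphi_t)\ast t=\sup_{x\in\varphi_t}(g(x)\ast t)\le\sup_x(g(x)\ast\varphi(x))$ by continuity and monotonicity of $\ast$; for $\ge$ take $t=\varphi(x)$), and upper semicontinuity of $x\mapsto g(x)\ast\varphi(x)$ gives attainment. The trade-off: the paper's argument is shorter but leans on an unpublished density theorem, while yours is self-contained modulo Theorem~\ref{Charac} and produces, as a by-product, the density representation of $\ast$-measures on compacta, which is of independent interest; the cost is that you must carry out the representation computation in full, whereas in the paper that work is hidden in the citation.
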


\begin{proof} Let $K\subset  [0,1]^T$ be a compact max-$\ast$ convex subset. Consider the subset $A^\ast_\omega(K)\subset A^\ast(K)$ defined as follows $$A^\ast_\omega(K)=\{\bigvee_{i=1}^n\lambda_i\ast \delta_{x_i}\mid n\in\N,\text{ }x_1,\dots, x_n\in A \text{ and }\lambda_1,\dots,\lambda_n\in[0,1]$$ $$\text{ such that }\bigvee_{i=1}^n\lambda_i=1\}.$$

It is known that $A^\ast_\omega(K)$ is dense in $A^\ast(K)$ \cite{Sukh}. Since $K$ is compact, it is enough to prove that $\beta_K(\mu)\in  K$ for each $\mu=\bigvee_{i=1}^n\lambda_i\ast \delta_{x_i}\in A^\ast_\omega(K)$. For each $t\in T$ we have $\pr_t(\beta_K(\mu))=\bigvee_{i=1}^n\lambda_i\ast \delta_{x_i}(f_t)=\bigvee_{i=1}^n\lambda_i\ast \pr_t(x_i)$. Hence $\beta_K(\mu)=\bigvee_{i=1}^n\lambda_i\ast x_i\in K$.

Continuity of the map $\beta_K$ follows from the definition of topology on $A^\ast(K)$.
\end{proof}

The map $\beta_K$ is called the $\ast$-barycenter map.

It was shown in \cite{R1} that each monad generates a convexity structure on its algebras. We will show in this section that convexities generated by the monad $\A^\ast$ coincide with described above max-$\ast$ convexities for each continuous t-norm $\ast$.

We will need some categorical notions and the construction of convexities generated by a monad from \cite{R1}.  Let $\T=(T,\eta,\mu)$ be a monad in the category ${\Comp}$. A
pair $(X,\xi)$, where $\xi:TX\to X$ is a map, is called a $\T$-{\it
algebra} if $\xi\circ\eta X=id_X$ and $\xi\circ\mu X=\xi\circ
T\xi$. Let $(X,\xi)$, $(Y,\xi')$ be two $\T$-algebras. A map
$f:X\to Y$ is called a  morphism of $\T$-algebras if $\xi'\circ
Tf=f\circ\xi$. A morphism of $\T$-algebras
 is called an isomorphism   if there exists an inverse morphism of $\T$-algebras.

Let $(X,\xi)$ be an $\F$-algebra for a monad $\F=(F,\eta,\mu)$ and let $A$ be a closed subset of $X$. Denote by $\chi_A$ the quotient map
$\chi_A:X\to X/A$ (the equivalence classes  are one-point sets $\{x\}$ for $x\in X\setminus A$ and the set $A$) and put $a=\chi_A(A)$. Denote $A^+=(F\chi_A)^{-1}(\eta(X/A)(a))$.    Define the $\F$-{\it convex
hull} $\conv_\F(A)$ of $A$ as follows
$\conv_\F(A)=\xi(A^+)$. Put additionally
$\conv_\F(\emptyset)=\emptyset$. We define the family
$\C_\F(X,\xi)=\{A\subset X|A $ is closed and $\conv_\F(A)=A\}$.
The elements of the family $\C_\F(X,\xi)$ will be called $\F$-{\it convex}.

\begin{proposition} Let $K\subset  [0,1]^T$ be a compact max-$\ast$ convex subset. Then the pair $(K,\beta_K)$ is an $\A^\ast$-algebra.
\end{proposition}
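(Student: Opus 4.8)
The plan is to verify the two defining equations of a $\A^\ast$-algebra directly from the definition of $\beta_K$, namely that $\beta_K\circ hK=\id_K$ and $\beta_K\circ mK=\beta_K\circ A^\ast\beta_K$. The key observation throughout is that both $hK$, $mK$ and $\beta_K$ are specified coordinatewise through the projections $\pi_\phi$ and $\pr_t$, so I would reduce each equality to a statement about the values $\pr_t(\cdots)$ for all $t\in T$, and exploit the fact that $f_t=\pr_t|_K\in C(K,[0,1])$ is exactly the continuous function whose value under a $\ast$-measure defines the $t$-th coordinate of the barycenter.

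**First I would check the unit law** $\beta_K\circ hK=\id_K$. Fix $x\in K$ and $t\in T$. By definition $\pr_t(\beta_K(hK(x)))=hK(x)(f_t)$, and since $hK(x)=\delta_x$ is the Dirac measure, we have $\pi_{f_t}\circ hK=f_t$, whence $\delta_x(f_t)=f_t(x)=\pr_t(x)$. As this holds for every $t\in T$, the two points $\beta_K(hK(x))$ and $x$ agree in all coordinates of $[0,1]^T$, so they are equal. This step is routine.

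**Next comes the associativity law**, which I expect to be the main obstacle. I must show $\pr_t(\beta_K(mK(\mathfrak{M})))=\pr_t(\beta_K(A^\ast\beta_K(\mathfrak{M})))$ for each $\mathfrak M\in (A^\ast)^2K$ and each $t\in T$. Unwinding the left side via $\pi_{f_t}\circ mK=\pi(\pi_{f_t})$ gives $mK(\mathfrak M)(f_t)=\mathfrak M(\pi_{f_t})$. For the right side, the functor is defined by $\pi_\phi\circ A^\ast\beta_K=\pi_{\phi\circ\beta_K}$, so $\pr_t(\beta_K(A^\ast\beta_K(\mathfrak M)))=A^\ast\beta_K(\mathfrak M)(f_t)=\mathfrak M(f_t\circ\beta_K)$. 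Thus the whole identity collapses to the single equation $f_t\circ\beta_K=\pi_{f_t}$ as elements of $C(A^\ast K,[0,1])$, after which both sides read $\mathfrak M(f_t\circ\beta_K)$ and coincide. The crux is therefore establishing $f_t\circ\beta_K=\pi_{f_t}$: evaluating at $\mu\in A^\ast K$, the left side is $\pr_t(\beta_K(\mu))=\mu(f_t)=\pi_{f_t}(\mu)$, which is precisely the defining property of $\beta_K$. So the identity holds by the very definition of the barycenter map.

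**In summary**, both algebra axioms reduce, coordinate by coordinate, to the definitional identity $\pr_t(\beta_K(\mu))=\mu(f_t)$ together with the coordinatewise descriptions of $hK$ and $mK$; no genuine convexity computation beyond the density argument already used in the preceding proposition is needed. The only point requiring mild care is keeping the functorial action $A^\ast\beta_K$ straight, since $\beta_K:A^\ast K\to K$ and one must track that $f_t\circ\beta_K$ is indeed the continuous function $\pi_{f_t}$ on $A^\ast K$; once that is pinned down, the associativity square commutes automatically.
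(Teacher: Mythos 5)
Your proof is correct and takes essentially the same route as the paper: both verify the two algebra laws coordinatewise via the projections $\pr_t$, and both reduce the associativity law $\beta_K\circ mK=\beta_K\circ A^\ast\beta_K$ to the single identity $f_t\circ\beta_K=\pi(f_t)$, which holds by the definition of the barycenter map. You actually supply slightly more detail than the paper, which dismisses the unit law and the key identity as immediate from the definitions.
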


\begin{proof} It is easy to see that $\beta_K\circ hK=\id_K$. Consider any $\Lambda\in A^\ast(A^\ast K)$ and $t\in T$.  We have $\pr_t\circ\beta_K\circ A^\ast(\beta_K)(\Lambda)=A^\ast(\beta_K)(\Lambda)(f_t)=\Lambda(f_t\circ \beta_K)$. On the other hand  $\pr_t\circ\beta_K\circ mK(\Lambda)=mK(\Lambda)(f_t)=\Lambda(\pi (f_t)$. But $f_t\circ \beta_K=\pi (f_t)$ by the definition of $\beta_K$, hence $\beta_K\circ A^\ast(\beta_K)=\beta_K\circ mK$.
\end{proof}

It is natural to ask whether  each $\A^\ast$-algebra has the above described form? More precisely, we have the following problem.

\begin{problem} Let $(X,\xi)$ be an $\A^\ast$-algebra. Is $(X,\xi)$ isomorphic to $(K,\beta_K)$ for some max-$\ast$ convex compactum $K\subset  [0,1]^T$?
\end{problem}

For a max-$\ast$ convex compactum $K\subset  [0,1]^T$ we denote the family of $\A^\ast$-convex subset of $K$ by $\C_{\A^\ast}(K,b_K)$ and the family of max-$\ast$ convex subsets by $\C_{\vee\ast}(K)$. The main goal of this section is to prove equality of these two families.

We will need to establish some properties of the functor $A^\ast$.

\begin{proposition} Let $i:K\to X$ be a topological embedding for compacta $K$ and $X$. Then the map $A^\ast i:A^\ast K\to A^\ast X$ is  a topological embedding too.
\end{proposition}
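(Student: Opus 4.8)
The plan is to reduce the statement to injectivity of $A^\ast i$. Since $A^\ast$ is an endofunctor on $\Comp$, both $A^\ast K$ and $A^\ast X$ are compacta, and $A^\ast i$ is continuous. A continuous injective map from a compactum into a Hausdorff space is a homeomorphism onto its image, hence a topological embedding. Therefore it suffices to verify that $A^\ast i$ is injective.

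The key observation is that precomposition with $i$ carries $C(X,[0,1])$ onto $C(K,[0,1])$. Indeed, since $i$ is an embedding and $K$ is compact, the image $i(K)$ is a closed subset of $X$ and $i:K\to i(K)$ is a homeomorphism. Given an arbitrary $\psi\in C(K,[0,1])$, the function $\psi\circ i^{-1}$ is a continuous map $i(K)\to[0,1]$ defined on a closed subspace of the compactum $X$. By the Tietze extension theorem there is $\phi\in C(X,[0,1])$ with $\phi|_{i(K)}=\psi\circ i^{-1}$, so that $\phi\circ i=\psi$. Thus every $\psi\in C(K,[0,1])$ is of the form $\phi\circ i$ for some $\phi\in C(X,[0,1])$.

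With this in hand, injectivity is immediate. Suppose $\mu_1,\mu_2\in A^\ast K$ satisfy $A^\ast i(\mu_1)=A^\ast i(\mu_2)$. By the defining relation $\pi_\phi\circ A^\ast i=\pi_{\phi\circ i}$ this means $\mu_1(\phi\circ i)=\mu_2(\phi\circ i)$ for every $\phi\in C(X,[0,1])$. Since every $\psi\in C(K,[0,1])$ equals $\phi\circ i$ for a suitable $\phi$, we conclude $\mu_1(\psi)=\mu_2(\psi)$ for all $\psi\in C(K,[0,1])$, i.e. $\mu_1=\mu_2$.

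The argument is essentially formal once the extension step is available, so the only real content is the surjectivity of the restriction map $\phi\mapsto\phi\circ i$; this is where I expect to spend care, invoking the Tietze extension theorem in the form that preserves the range $[0,1]$ (valid since compacta are normal and $i(K)$ is closed in $X$). Everything else follows from functoriality together with the compact/Hausdorff interplay that upgrades a continuous bijection onto the image into a homeomorphism.
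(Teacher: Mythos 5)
Your proposal is correct and follows essentially the same route as the paper: reduce to injectivity via the compact--Hausdorff argument, then use the Tietze extension theorem to realize every function in $C(K,[0,1])$ as $\phi\circ i$ for some $\phi\in C(X,[0,1])$, which forces functionals agreeing on such compositions to coincide. The only cosmetic difference is that you argue via surjectivity of the restriction map while the paper separates two distinct functionals by extending a single witnessing function; these are the same idea stated contrapositively.
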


\begin{proof} Since $A^\ast K$ is a compactum, it is enough to prove that $A^\ast i$ is injective. Consider any $\nu$, $\mu\in A^\ast K$ such that $\nu\ne \mu$. Then there exists $\varphi\in C(K,[0,1])$ such that $\nu(\varphi)\ne\mu(\varphi)$. Take any $\varphi'\in C(X,[0,1])$ such that $\varphi'|_K=\varphi$. Then we have $A^\ast i(\nu)(\varphi')=\nu(\varphi'\circ i)=\nu(\varphi)\ne\mu(\varphi)=\mu(\varphi'\circ i)=A^\ast i(\mu)(\varphi')$.
\end{proof}

Let $K$ be a closed subset of a compactum $X$. We will identify the space $A^\ast K$ with its image $A^\ast i(A^\ast K)$ in $A^\ast X$ where $i:K\hookrightarrow X$ is the  identity
embedding.

\begin{lemma}\label{func} Let $\mu\in A^\ast X$ and let $K$ be a closed subset of $X$.
Then $\mu\in A^\ast K\subset A^\ast X$ iff for each $\phi_1$, $\phi_2\in C(X,[0,1])$
with $\phi_1|K=\phi_2|K$ we have $\mu(\phi_1)=\mu(\phi_2)$.
\end{lemma}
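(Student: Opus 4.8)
The plan is to prove the two implications separately, with the real work confined to the converse, which I would handle by constructing the required $\ast$-measure on $K$ by hand. The forward implication is immediate from the identification $A^\ast K = A^\ast i(A^\ast K)$: if $\mu = A^\ast i(\nu)$ for some $\nu\in A^\ast K$, then by the defining formula $\mu(\phi)=\nu(\phi\circ i)=\nu(\phi|_K)$ for each $\phi\in C(X,[0,1])$, so whenever $\phi_1|_K=\phi_2|_K$ we get $\mu(\phi_1)=\nu(\phi_1|_K)=\nu(\phi_2|_K)=\mu(\phi_2)$.

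For the converse, assume $\mu\in A^\ast X$ satisfies $\mu(\phi_1)=\mu(\phi_2)$ whenever $\phi_1|_K=\phi_2|_K$. I would define a functional $\nu:C(K,[0,1])\to[0,1]$ by $\nu(\psi)=\mu(\tilde\psi)$, where $\tilde\psi\in C(X,[0,1])$ is an arbitrary continuous extension of $\psi$. Such an extension exists by the Tietze extension theorem ($X$ is compact Hausdorff, hence normal, $K$ is closed, and the extension can be chosen with values in $[0,1]$), and the value $\mu(\tilde\psi)$ is independent of the chosen extension exactly by the hypothesis, since any two extensions of $\psi$ agree on $K$. Thus $\nu$ is well defined.

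Next I would verify that $\nu\in A^\ast K$, i.e.\ that $\nu$ satisfies the three defining conditions of a $\ast$-measure, each condition being inherited from $\mu$ because the relevant operation applied to extensions is again an extension of the corresponding operation on $K$. Indeed, $1_X$ extends $1_K$, so $\nu(1_K)=\mu(1_X)=1$; for $\psi_1,\psi_2\in C(K,[0,1])$ with extensions $\tilde\psi_1,\tilde\psi_2$ the function $\tilde\psi_1\vee\tilde\psi_2$ extends $\psi_1\vee\psi_2$, whence $\nu(\psi_1\vee\psi_2)=\mu(\tilde\psi_1\vee\tilde\psi_2)=\mu(\tilde\psi_1)\vee\mu(\tilde\psi_2)=\nu(\psi_1)\vee\nu(\psi_2)$; and $c_X\ast\tilde\psi$ extends $c_K\ast\psi$, giving $\nu(c_K\ast\psi)=\mu(c_X\ast\tilde\psi)=c\ast\mu(\tilde\psi)=c\ast\nu(\psi)$. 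Finally, to conclude $\mu\in A^\ast K$ I would check $A^\ast i(\nu)=\mu$: for any $\varphi\in C(X,[0,1])$ the function $\varphi$ itself is an extension of $\varphi|_K$, so $A^\ast i(\nu)(\varphi)=\nu(\varphi|_K)=\mu(\varphi)$.

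Since the argument rests only on the Tietze extension theorem together with the well-definedness furnished by the hypothesis, I do not expect a serious obstacle. The only points meriting a little care are ensuring that extensions can be taken with values in $[0,1]$ rather than in an arbitrary interval, and observing that $\vee$ and $c_X\ast(-)$ of extensions restrict correctly to $K$; both are routine once stated explicitly.
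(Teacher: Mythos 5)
Your proof is correct and follows essentially the same route as the paper: the forward direction via the identification $A^\ast K=A^\ast i(A^\ast K)$, and the converse by defining $\nu(\psi)=\mu(\tilde\psi)$ through an arbitrary continuous extension, well defined by the hypothesis. The paper compresses the verification that $\nu\in A^\ast K$ and $A^\ast i(\nu)=\mu$ into ``it is easy to see,'' whereas you supply exactly those routine checks (Tietze extension in $[0,1]$ and compatibility of $\vee$ and $c\ast(-)$ with restriction), so there is nothing to correct.
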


\begin{proof} Let $\mu\in A^\ast K\subset A^\ast X$. Denote by $i:K\to X$ the identity
embedding. Let $\phi_1$, $\phi_2\in C(X,[0,1])$ be functions with
$\phi_1|K=\phi_2|K$. There exists a functional $\nu\in A^\ast K$ such that
$A^\ast i(\nu)=\mu$. Then we have $\mu(\phi_1)=\nu(\phi_1|A)=\nu(\phi_2|A)=
\mu(\phi_2)$.

Now let $\mu\in A^\ast X$ be a functional such that $\mu(\phi_1)=\mu(\phi_2)$
for each $\phi_1$, $\phi_2\in C(X,[0,1])$ with $\phi_1|A=\phi_2|A$. Then we can
define a functional $\nu\in  A^\ast K$ by the formula $\nu(\phi)=\mu(\phi')$,
where $\phi\in C(X,[0,1])$ and $\phi'$ is any extension of $\phi$ on $X$.
It is easy to see that $\nu\in  A^\ast K$ is well defined and $A^\ast i(\nu)=\mu$.
\end{proof}

\begin{corollary}\label{cor} Let $\mu\in A^\ast X$ and let $K$ be a closed subset of $X$.
Then $\mu\in A^\ast K\subset A^\ast X$ iff for each $\phi\in C(X,[0,1])$
with $\phi|K\equiv 0$  we have $\mu(\phi)=0$.
\end{corollary}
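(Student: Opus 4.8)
The plan is to derive Corollary~\ref{cor} from Lemma~\ref{func}, since the forward implication is immediate and only the reverse direction requires work. First I would observe that if $\mu\in A^\ast K\subset A^\ast X$ and $\phi\in C(X,[0,1])$ with $\phi|K\equiv 0$, then applying Lemma~\ref{func} to the pair $\phi$ and $0_X$ (which agree on $K$) gives $\mu(\phi)=\mu(0_X)$; it then remains to note that $\mu(0_X)=0$, which follows from condition (3) in the definition of a $\ast$-measure by taking $c=0$ and $\varphi=1_X$, so that $\mu(0_X)=\mu(0_X\ast 1_X)=0\ast\mu(1_X)=0$.

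The substantive direction is the converse. Here I would assume that $\mu(\phi)=0$ for every $\phi\in C(X,[0,1])$ vanishing on $K$, and the goal is to verify the hypothesis of Lemma~\ref{func}: namely, that $\mu(\phi_1)=\mu(\phi_2)$ whenever $\phi_1|K=\phi_2|K$. The natural strategy is to control $\mu(\phi_1)$ by $\mu(\phi_2)$ and vice versa using the max-preserving property (condition~(2) of a $\ast$-measure) together with the monotonicity of $\mu$. Given two functions agreeing on $K$, I would write each as controlled by the other plus an error term supported off $K$, and then exploit that $\mu$ of that error term is forced to be $0$ by hypothesis. Concretely, since $\phi_1$ and $\phi_2$ agree on $K$, the function $(\phi_1-\phi_2)\vee 0$ vanishes on $K$, as does $(\phi_2-\phi_1)\vee 0$, and one has the pointwise identity $\phi_1\vee\phi_2=\phi_2\vee\bigl((\phi_1-\phi_2)\vee 0\bigr)$; applying $\mu$ and using that it preserves $\vee$ yields $\mu(\phi_1)\vee\mu(\phi_2)=\mu(\phi_1\vee\phi_2)=\mu(\phi_2)\vee\mu\bigl((\phi_1-\phi_2)\vee 0\bigr)=\mu(\phi_2)\vee 0=\mu(\phi_2)$, so $\mu(\phi_1)\le\mu(\phi_2)$, and by symmetry the reverse inequality holds.

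The main obstacle I anticipate is the bookkeeping with the error functions rather than any deep difficulty: one must check that $(\phi_1-\phi_2)\vee 0$ genuinely lands in $C(X,[0,1])$ (it is continuous, nonnegative, and bounded by $1$ since both $\phi_i$ take values in $[0,1]$) and that it vanishes precisely on $K$ as needed, so that the hypothesis $\mu=0$ applies to it. Once both inequalities $\mu(\phi_1)\le\mu(\phi_2)$ and $\mu(\phi_2)\le\mu(\phi_1)$ are established, equality $\mu(\phi_1)=\mu(\phi_2)$ follows, the hypothesis of Lemma~\ref{func} is met, and we conclude $\mu\in A^\ast K\subset A^\ast X$, completing the proof.
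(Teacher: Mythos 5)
Your forward direction is fine, and it is exactly the intended specialization of Lemma~\ref{func}: take $\phi_2=0_X$ and note $\mu(0_X)=\mu(0_X\ast 1_X)=0\ast\mu(1_X)=0$. (The paper itself states the corollary without proof, treating it as immediate from the lemma; this is the immediate half.) The converse, however, contains a genuine error: the pointwise identity $\phi_1\vee\phi_2=\phi_2\vee\bigl((\phi_1-\phi_2)\vee 0\bigr)$ on which your whole argument rests is false. At a point $x$ with $\phi_1(x)=1$ and $\phi_2(x)=\tfrac12$ the left-hand side equals $1$ while the right-hand side equals $\tfrac12\vee\tfrac12=\tfrac12$. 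Even the weaker inequality $\phi_1\le\phi_2\vee\bigl((\phi_1-\phi_2)\vee 0\bigr)$ fails at such a point: the positive part of a difference is an additive notion, and taking $\vee$ with it cannot reconstruct $\phi_1$ (that would require $+$, with which $\mu$ has no compatibility). Consequently the chain $\mu(\phi_1)\vee\mu(\phi_2)=\mu(\phi_2)\vee\mu\bigl((\phi_1-\phi_2)\vee 0\bigr)$ is unjustified and $\mu(\phi_1)\le\mu(\phi_2)$ is never established.

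The gap is not repairable by a cleverer exact decomposition: in general there is \emph{no} $\psi\in C(X,[0,1])$ with $\psi|K\equiv 0$ and $\phi_1\le\phi_2\vee\psi$. For instance, take $X=[0,1]$, $K=\{0\}$, $\phi_1\equiv 1$, $\phi_2(x)=1-x$; these agree on $K$, yet any such $\psi$ would have to equal $1$ on $(0,1]$ while vanishing at $0$, contradicting continuity. The converse genuinely needs an approximation argument. One route: for $\varepsilon>0$ the closed set $F_\varepsilon=\{x\mid\phi_1(x)\ge\phi_2(x)+\varepsilon\}$ is disjoint from $K$, so a Urysohn function $u$ with $u|F_\varepsilon\equiv 1$, $u|K\equiv 0$ gives $\phi_1\le\bigl((\phi_2+\varepsilon)\wedge 1\bigr)\vee(\phi_1\wedge u)$, whence $\mu(\phi_1)\le\mu\bigl((\phi_2+\varepsilon)\wedge 1\bigr)$ since $\phi_1\wedge u$ vanishes on $K$; but passing to the limit $\varepsilon\to 0$ requires sup-norm continuity of $\mu$, which does not follow from axioms (1)--(3) alone and must be extracted from the integral representation. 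Alternatively, argue entirely through Theorem~\ref{Charac}: write $\mu$ as the t-normed integral with respect to $\nu\in\Pi X$; your hypothesis applied to Urysohn functions (equal to $1$ on $F$, to $0$ on $K$) gives $\nu(F)=0$ for every closed $F$ disjoint from $K$, hence the density $x\mapsto\nu(\{x\})$ vanishes off $K$, and then $\mu(\phi)=\sup_{x\in X}\nu(\{x\})\ast\phi(x)$ depends only on $\phi|K$, so Lemma~\ref{func} applies. Either way, a substantive ingredient beyond the formal $\vee$-manipulation is needed, and your proposal is missing it.
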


\begin{lemma}\label{tech} Let $f:X\to Y$ be a continuous map between compacta $X$ and $Y$ and let $K$ be a closed subset of $Y$. Consider any function $\phi\in C(X,[0,1])$ such that $\phi|f^{-1}(K)\equiv 0$.
Then there exists $\psi\in C(Y,[0,1])$ such that $\psi|K\equiv 0$ and $\phi\le \psi\circ f$.
\end{lemma}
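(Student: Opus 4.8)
The plan is to construct $\psi$ as a dyadic-weighted sum of Urysohn functions built from the super-level sets of $\phi$. First I would, for each $n\in\N$, consider the closed set $C_n=\{x\in X\mid \phi(x)\ge 2^{-n}\}$ and push it forward: since $X$ is compact and $f$ continuous, $F_n=f(C_n)$ is compact, hence closed in $Y$. The vanishing hypothesis then gives $F_n\cap K=\emptyset$, for any common point $y=f(x)$ with $x\in C_n$ would satisfy $x\in f^{-1}(K)$ and yet $\phi(x)\ge 2^{-n}>0$, contradicting $\phi|_{f^{-1}(K)}\equiv 0$.

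Next, using normality of the compactum $Y$, I would apply Urysohn's lemma to the disjoint closed sets $F_n$ and $K$ to obtain functions $u_n\in C(Y,[0,1])$ with $u_n|_{F_n}\equiv 1$ and $u_n|_K\equiv 0$, and then set $\psi=\sum_{n=1}^\infty 2^{-n}u_n$. The Weierstrass $M$-test (using $\|2^{-n}u_n\|\le 2^{-n}$ and $\sum 2^{-n}=1$) guarantees uniform convergence, so $\psi\in C(Y,[0,1])$; moreover $\psi|_K\equiv 0$ since every term vanishes on $K$.

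It remains to verify $\phi\le\psi\circ f$, which is the heart of the argument. I would fix $x\in X$ with $t=\phi(x)>0$ (the case $t=0$ being trivial) and let $m$ be the least integer with $2^{-m}\le t$. For every $n\ge m$ we then have $\phi(x)\ge 2^{-m}\ge 2^{-n}$, so $x\in C_n$, whence $f(x)\in F_n$ and $u_n(f(x))=1$. Consequently $\psi(f(x))\ge\sum_{n\ge m}2^{-n}=2^{-m+1}\ge t=\phi(x)$, where the final inequality holds by the minimality defining $m$ (for $m\ge 2$ one has $t<2^{-m+1}$, and for $m=1$ one has $t\le 1=2^{-m+1}$).

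The only delicate point is the calibration of the weights: the geometric coefficients $2^{-n}$ are chosen precisely so that the tail sum $\sum_{n\ge m}2^{-n}=2^{-m+1}$ dominates every value $t$ lying in the layer $[2^{-m},2^{-m+1})$. I expect this calibration to be the main thing to get right, since naive weights (for instance ones decaying faster than the level thresholds) would fail to majorize $\phi$; once it is in place, the remainder is routine compactness together with Urysohn's lemma.
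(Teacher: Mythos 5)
Your proof is correct, and it takes a genuinely different route from the paper's. Both arguments hinge on the same key geometric fact, namely that the images under $f$ of the super-level sets of $\phi$ are closed in $Y$ and disjoint from $K$ (in your notation $F_n=f(C_n)$; in the paper's notation the sets $B_\varepsilon$, whose closedness the paper establishes by a fiber argument, whereas you get it for free as a continuous image of a compact set). Where the proofs diverge is in how they convert this family of closed sets into the function $\psi$: the paper inductively builds a nested sequence of closed neighborhoods $V_1\subset\Int V_2\subset\cdots$ of these sets, all disjoint from $K$, and then constructs $\psi$ by hand as a staircase-like function taking prescribed values on the annuli $V_{i+1}\setminus V_i$ (a step it leaves as ``easy to see''), while you let Urysohn's lemma and a geometric series do that interpolation work: $\psi=\sum_n 2^{-n}u_n$ with $u_n\equiv 1$ on $F_n$ and $u_n\equiv 0$ on $K$. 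Your calibration $\psi\circ f\ge 2^{-m+1}\ge t$ on the layer $2^{-m}\le t=\phi(x)<2^{-m+1}$ is verified cleanly, including the boundary case $m=1$, and the edge cases ($\phi\equiv 0$, or $F_n=\emptyset$) are harmless since Urysohn's lemma degenerates gracefully there. What your approach buys is brevity and the elimination of the inductive set-theoretic construction; what the paper's buys is a $\psi$ with the slightly sharper bound $\max\psi=\max\phi$ (your $\psi$ can exceed $\phi$ by a factor of two, capped at $1$), though the lemma does not require this.
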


\begin{proof} If $\phi\equiv 0$, then we put $\psi\equiv 0$. So, we assume that $\max_{x\in X}\phi(x)=b>0$.

For $\varepsilon>0$ denote $$B_\varepsilon=\{y\in Y\mid \text{ there exists } x\in f^{-1}(y)\text{ such that }\varphi(x)\le \varepsilon\}.$$ Let us show that the set $B_\varepsilon$ is closed in $Y$. Take any $z\notin B_\varepsilon$. Then $f^{-1}(z)\subset U=\{x\in X\mid  \varphi(x)< \varepsilon\}.$ Since $f^{-1}(z)$ is compact and $U$ is an open subset of $X$, we can choose an open neighborhood $V$ of $z$ in $Y$ such that $f^{-1}(V)\subset U.$ Then $z\in V\subset Y\setminus B_\varepsilon$, hence  $Y\setminus B_\varepsilon$ is open.

Let us build by the induction a sequence of closed sets $V_i$ in $Y$ such that $V_i\cap K=\emptyset$, $V_i\subset\Int V_{i+1}$ and $V_i\supset B_\frac{b}{2^i}$ for each $i\in \N$.

Put $V_1=B_\frac{b}{2}$. Suppose we have constructed $V_1,\dots,V_k$ for some $k\ge 1$. Take any open set $V$ such that $V_k\subset V\subset\Cl V\subset Y\setminus K$. Put $V_{k+1}=\Cl V\cup B_\frac{b}{2^{k+1}}$.

We can construct by the induction a function $\psi\in C(Y,[0,1])$ such that $\psi|K\equiv 0$, $\psi|V_1\equiv b$ and $\psi(x)\in [\frac{b}{2^{i-1}},\frac{b}{2^i}]$ for each $x\in V_{i+1}\setminus V_i$ and $i\in \N$. It is easy to see that $\psi$ is the function we are looking for.
\end{proof}

\begin{lemma}\label{preim} Let $f:X\to Y$ be a continuous map between compacta $X$ and $Y$ and let $K$ be a closed subset of $Y$. 
Then we have $(A^\ast f)^{-1}(A^\ast K)=A^\ast (f^{-1}(K))$.
\end{lemma}

\begin{proof}  It is easy to see that $(A^\ast f)^{-1}(A^\ast K)\supset A^\ast (f^{-1}(K))$. Let us prove the opposite inclusion.

Take any $\nu\in(A^\ast f)^{-1}(A^\ast K)$. Suppose that $\nu\notin A^\ast (f^{-1}(K))$. Then there exists a function $\phi\in C(X,[0,1])$
with $\phi|f^{-1}(K)\equiv 0$ and $\nu(\phi)>0$ by Corollary \ref{cor}. We can take a function $\psi\in C(Y,[0,1])$ such that $\psi|K\equiv 0$ and $\phi\le \psi\circ f$ by Lemma \ref{tech}. Then we have $A^\ast f(\nu)(\psi)=\nu(\psi\circ f)\ge\nu(\phi)>0$ and we obtain a contradiction with $A^\ast f(\nu)\in A^\ast K$.
\end{proof}

\begin{theorem} Let $K$ be a max-$\ast$ convex compactum. Then  $\C_{\A^\ast}(K,b_K)=\C_{\vee\ast}(K)$.
\end{theorem}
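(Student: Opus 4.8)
The plan is to compute the $\A^\ast$-convex hull $\conv_{\A^\ast}(A)$ of an arbitrary closed subset $A\subset K$ directly from the definition and to show that it coincides with the smallest closed max-$\ast$ convex set containing $A$, which I denote $\conv_{\vee\ast}(A)$. Once this is established, a closed set $A$ lies in $\C_{\A^\ast}(K,b_K)$ precisely when $\conv_{\A^\ast}(A)=A$, that is, when $\conv_{\vee\ast}(A)=A$, that is, when $A$ is a closed max-$\ast$ convex subset; hence both families coincide with the family of closed max-$\ast$ convex subsets of $K$. (Recall that $(K,\beta_K)$ is an $\A^\ast$-algebra by the proposition above, so $\conv_{\A^\ast}$ is well defined.)

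First I would unwind the definition of $A^+$. With $\chi_A:K\to K/A$ the quotient map and $a=\chi_A(A)$, we have $A^+=(A^\ast\chi_A)^{-1}(h(K/A)(a))$. The key observation is that $A^\ast$ applied to the one-point space $\{a\}$ is again a single point: a $\ast$-measure on a one-point space must send every constant function $c_{\{a\}}=c\ast 1_{\{a\}}$ to $c\ast 1=c$, so it is forced to be the identity, and under the embedding $A^\ast\{a\}\hookrightarrow A^\ast(K/A)$ this single point is exactly the Dirac measure $h(K/A)(a)$. Therefore $A^+=(A^\ast\chi_A)^{-1}(A^\ast\{a\})$, and since $\chi_A^{-1}(\{a\})=A$, Lemma \ref{preim}, applied to the map $\chi_A$ and the closed subset $\{a\}$ of $K/A$, gives $A^+=A^\ast(\chi_A^{-1}(\{a\}))=A^\ast A$, regarded as a subspace of $A^\ast K$.

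Next I would identify $\conv_{\A^\ast}(A)=\beta_K(A^+)=\beta_K(A^\ast A)$. Because the max-$\ast$ operations on $A^\ast K\subset[0,1]^{C(K,[0,1])}$ are taken coordinatewise, the embedding $A^\ast i$ preserves max-$\ast$ combinations and, by naturality of $h$, carries $\delta_x\in A^\ast A$ to $\delta_x\in A^\ast K$. Using the formula $\beta_K(\bigvee_{i}\lambda_i\ast\delta_{x_i})=\bigvee_{i}\lambda_i\ast x_i$ established in the barycenter proposition, the image $\beta_K(A^\ast_\omega(A))$ equals $\{\bigvee_{i=1}^n\lambda_i\ast x_i\mid x_i\in A,\ \bigvee_{i}\lambda_i=1\}$, the set of all max-$\ast$ combinations of points of $A$. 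Now $A^\ast_\omega(A)$ is dense in $A^\ast A$ by \cite{Sukh} and $\beta_K$ is continuous, so $\beta_K(A^\ast A)$ is the closure of this algebraic max-$\ast$ hull. Continuity of $\vee$ and $\ast$ shows that the closure of a max-$\ast$ convex set is again max-$\ast$ convex, while every closed max-$\ast$ convex set containing $A$ contains all such finite combinations; hence $\beta_K(A^\ast A)=\conv_{\vee\ast}(A)$, which completes the computation.

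The equality of the two families then follows as indicated in the first paragraph. I expect the main obstacle to be the identification $A^+=A^\ast A$: one must handle the quotient $K/A$ with care (Hausdorffness of $K/A$ and closedness of the point $a$), verify that $A^\ast$ of a one-point space reduces to the single Dirac measure $h(K/A)(a)$, and match the hypotheses of Lemma \ref{preim} to the map $\chi_A$. The subsequent density and closure arguments are routine once continuity of $\beta_K$ and of the lattice and t-norm operations is in hand.
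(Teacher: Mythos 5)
Your proof is correct, and it leans on exactly the same technical ingredients as the paper's: Lemma \ref{preim} applied to $\chi_A$ and the point $a$, the density of $A^\ast_\omega$ in $A^\ast$, continuity of $\beta_K$, and the formula $\beta_K(\bigvee_i\lambda_i\ast\delta_{x_i})=\bigvee_i\lambda_i\ast x_i$. The difference is organizational. The paper proves the two inclusions of families separately: for $\C_{\A^\ast}(K,b_K)\subset\C_{\vee\ast}(K)$ it uses only a direct two-point computation (checking by hand that $\lambda\ast\delta_e\vee\delta_d\in(A^\ast\chi_C)^{-1}(\delta_c)$, with no need for Lemma \ref{preim} or density), and for the converse inclusion it argues by contradiction via density of $A^\ast_\omega C$ in $A^\ast C=(A^\ast\chi_C)^{-1}(\delta_c)$. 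You instead compute the hull once and for all, establishing the identity $\conv_{\A^\ast}(A)=\conv_{\vee\ast}(A)$ for every closed $A\subset K$, from which both inclusions fall out; this is a mild strengthening of the stated theorem (it identifies the hulls, not just the fixed sets of the hull operators), at the cost of needing the closure-of-convex-is-convex and smallest-closed-convex-set arguments that the paper's contradiction route avoids. You also make explicit a step the paper leaves implicit: that $A^\ast$ of a one-point space is a single point identified with the Dirac measure, which is what licenses rewriting $(A^\ast\chi_A)^{-1}(h(K/A)(a))$ as $(A^\ast\chi_A)^{-1}(A^\ast\{a\})$ so that Lemma \ref{preim} applies; spelling this out (together with closedness of $\{a\}$ in the compactum $K/A$) is a genuine improvement in rigor over the paper's terse use of the same identification.
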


\begin{proof} Let $C$ be a closed subset of $K$. As usual we denote by $\chi_C$ the quotient map
$\chi_C:K\to K/C$  and  $c=\chi_C(C)$.

Let $C\in\C_{\A^\ast}(K,b_K)$. Consider any $e,d\in C$ and $\lambda\in[0,1]$. Put $\nu=\lambda\ast \delta_e\vee\delta_d\in
A^\ast C\subset A^\ast K$. We have $\nu\in (A^\ast \chi_C)^{-1}(\delta_c)$. Since $C\in\C_{\A^\ast}(K,b_K)$, we have $b_K(\nu)\in C$. But $b_K(\nu)=b_K(\lambda\ast \delta_e\vee\delta_d)=\lambda\ast e\vee d$, hence $C\in\C_{\vee\ast}(K)$.

Now, consider any $C\in\C_{\vee\ast}(K)$.  Suppose the contrary $C\notin\C_{\A^\ast}(K,b_K)$, i.e. there exists $\nu\in (A^\ast \chi_C)^{-1}(\delta_c)$ such that $b_K(\nu)\notin C$. We have by Lemma \ref{preim} $(A^\ast \chi_C)^{-1}(\delta_c)=A^\ast( \chi_C^{-1}(c))=A^\ast C$. Since $A^\ast_\omega C$ is dense in $A^\ast C$, we can choose $x_1,\dots,x_k\in C$ and $\lambda_1,\dots,\lambda_k\in [0,1]$ with $\bigvee_{i=1}^n\lambda_i=1$ such that $\bigvee_{i=1}^n\lambda_i\ast \delta_{x_i}\notin b_K^{-1}(C)$.
%Evidently we have $A^\ast \chi_C(\bigvee_{i=1}^n\lambda_i\ast \delta_{x_i})=\delta_c$.
But $b_K(\bigvee_{i=1}^k\lambda_i\ast \delta_{x_i})=\bigvee_{i=1}^k\lambda_i\ast x_i\in C$, because $C\in\C_{\vee\ast}(K)$. We obtain a contradiction.
\end{proof}


\begin{thebibliography}{}


\bibitem{BCh}  W.Briec, Ch.Horvath {\em Nash points, Ky Fan inequality and equilibria of abstract economies in Max-Plus and $\mathbb B$-convexity,} J. Math. Anal. Appl. {\bf 341} (2008), 188--199.





\bibitem{CLM}  Luis M. de Campos, Mar\'{\i}a T. Lamata and Seraf\'{\i}n Moral {\em A unified approach to define fuzzy integrals}, Fuzzy Sets and Systems {\bf 39} (1991), 75--90.

\bibitem{Ch}  G. Choquet {\em Theory of Capacity,} An.l'Instiute Fourie {\bf 5} (1953-1954), 13--295.



\bibitem{Den} D. Denneberg, Non-Additive Measure and Integral. Kluwer, Dordrecht, 1994.

\bibitem{EK} J.Eichberger, D.Kelsey, {\em Non-additive beliefs and strategic equilibria}, Games Econ Behav {\bf 30} (2000) 183--215.



\bibitem{EM} S.Eilenberg, J.Moore, {\it Adjoint functors and triples}, Ill.J.Math., {\bf 9} (1965), 381--389.



\bibitem{FZ} V.Brydun, M.Zarichnyi, {\em Spaces of max-min measures on compact Hausdorff spaces}, Fuzzy Sets and Systems, 138-151 {\bf 396} (2008) 138--151.

\bibitem{Gil} I.Gilboa, {\em Expected utility with purely
subjective non-additive probabilities}, J. of Mathematical
Economics {\bf 16} (1987) 65--88.



\bibitem{Gilies} D.B.Gillies, {\em Solutions to General Non-Zero-Sum Games}, In Contributions
to the Theory of Games, vol. IV, Annals of Mathematics Studies,
No. 40, pp. 47--85, Princeton University Press.

\bibitem{E}  I.L.Glicksberg  {\em A further generalization of the Kakutani fixed point theorem, with application to Nash
equilibrium points,} Proc. Am. Math. Soc. {\bf 5} (1952), 170--174.



\bibitem{Grab} Michel Grabisch, Set Functions, Games
and Capacities in Decision
Making. Springer,  2016.

\bibitem{NH} I.D. Hlushak, O.R.Nykyforchyn, {\em Submonads of the capacity monad}, Carpathian J. of Math. {\bf 24:1} (2008) 56--67.

\bibitem{KZ} R.Kozhan, M.Zarichnyi, {\em Nash equilibria for games in capacities}, Econ. Theory {\bf 35} (2008) 321--331.

\bibitem{Lin} Lin Zhou, {\em Integral representation of continuous comonotonically additive functionals}, Transactions of the American Mathematical Society {\bf350} (1998) 1811--1822.



\bibitem{PRP} E.P.Klement, R.Mesiar and E.Pap. {\em Triangular Norms}. Dordrecht: Kluwer. 2000.

%\bibitem{Ma} J.-L. Marichal, {\em On Sugeno integral as an aggregation function}, Fuzzy Sets and Systems {\bf 114} (2000) 347--365.

\bibitem{NR} O.R.Nykyforchyn, D.Repov\^{s} {\em L-Convexity and Lattice-Valued Capacities}, Journal of Convex Analysis {\bf 21} (2014) 29--52.

%\bibitem{Nyk} O.R.Nykyforchyn, {\em The Sugeno integral and functional representation of the monad of
%lattice-valued capacities}, Topology {\bf 48} (2009) 137--148.

\bibitem{NZ} O.R.Nykyforchyn, M.M.Zarichnyi, {\em Capacity functor in the category of compacta}, Mat.Sb. {\bf 199} (2008) 3--26.

%\bibitem{R} T.Radul, {\em A functional  representation of capacity monad},  Topology {\bf 48} (2009) 100--104.

\bibitem{Rad} T.Radul, {\em Games in possibility capacities with  payoff expressed by fuzzy integral},  Fuzzy Sets and systems {\bf 434} (2022) 185--197.

\bibitem{Rad1} T.Radul, {\em A functional representation of the capacity multiplication monad},  Visnyk of the Lviv Univ. Series Mech. Math.   {\bf 86} (2018) 125--133.

\bibitem{R1} T.Radul, {\em Convexities generated by L-monads},  Applied Categorical Structures {\bf 19} (2011) 729--739.

\bibitem{R3} T.Radul, {\em Nash equilibrium for binary convexities},  Topological Methods in Nonlinear Analysis {\bf 48} (2016) 555--564.

\bibitem{R4} T.Radul, {\em Equilibrium under uncertainty with Sugeno payoff},  Fuzzy Sets and sytems {\bf 349} (2018) 64--70.

\bibitem{R5} T.Radul, {\em Some remarks on characterization of t-normed integrals on compacta},  Fuzzy Sets and sytems (submitted).

\bibitem{Sch} D.Schmeidler, {\em Subjective probability
and expected utility without additivity}, Econometrica {\bf 57} (1989) 571--587.

\bibitem{Sua}  F. Suarez, {\em Familias de integrales difusas y medidas de entropia relacionadas}, Thesis, Universidad
de Oviedo, Oviedo (1983).

\bibitem{Su}  M.Sugeno, {\em Fuzzy measures and fuzzy integrals}, A survey. In Fuzzy Automata and Decision
Processes. North-Holland, Amsterdam: M. M. Gupta, G. N. Saridis et B. R. Gaines editeurs. 89--102. 1977

\bibitem{Sukh} Kh. Sukhorukova, {\em Spaces of non-additive measures generated by triangular norms}, Proc.
Intern. Geometry Center (submitted).

\bibitem{WK} Zhenyuan Wang, George J.Klir   {\em Generalized measure theory}, Springer, New York, 2009.

\bibitem{We1}  S. Weber {\em Decomposable measures and integrals for archimedean t-conorms}, J. Math. Anal. Appl. {\bf 101} (1984), 114--138.

\bibitem{We2}  S. Weber {\em Two integrals and some modified versions - Critical remarks}, Fuzzy Sets and Systems {\bf 20} (1986), 97--105.

\bibitem{Za} M.M.Zarichnyi, {\em Triangular norms, nonlinear functionals and convexity theories}, Emerging Trends in Applied Mathematics and Mechanics,2018, Krakow, June 18 - 22, 2018 p.170.

\bibitem{Z} K. Zimmermann, {\em A general separation theorem in extremal algebras}, Ekon.-Mat. Obz. {\bf 13} (1977) 179--201.

\end{thebibliography}
\end{document}